\newtheorem{theorem}{Theorem}[section]
\newtheorem{proposition}{Proposition}[section]
\newtheorem{corollary}[theorem]{Corollary}
\newtheorem{lemma}[theorem]{Lemma}
\newcommand{\e}{\varepsilon}
\newcommand{\F}{{\mathcal F}}
\newcommand{\U}{{\mathcal U}}
\DeclareMathOperator{\Ker}{Ker}
\DeclareMathOperator{\End}{End}
\DeclareMathOperator{\tr}{tr}
\begin{document}

\title[A mapping from unitary to doubly stochastic matrices]{A mapping from the unitary to doubly stochastic matrices and symbols on a finite set}

\author[A.V. Karabegov]{Alexander V. Karabegov}
\address[Alexander V. Karabegov]{Department of Mathematics, Abilene Christian University, ACU Box 28012, 215 Foster Science Building, Abilene, TX 79699-8012}
\email{axk02d@acu.edu}
\begin{abstract}
  We prove that the mapping from the unitary to the doubly stochastic matrices
that maps a unitary matrix $(u_{kl})$ to the doubly stochastic matrix $(|u_{kl}|^2)$ is a submersion for almost all unitary matrices. The proof uses the framework of operator symbols on a finite set.
\end{abstract}
\subjclass{Primary: 53A07; Secondary: 35S05}
\keywords{doubly stochastic matrices, symbols}

\maketitle

\section{Introduction}

Given a unitary matrix $(u_{kl})$, the matrix  $(|u_{kl}|^2)$ is doubly stochastic. The mapping 
\[
   \mu: (u_{kl}) \mapsto (|u_{kl}|^2)
\]
from the set $U$ of unitary $(n\times n)$-matrices to the set $P$ of doubly stochastic $(n\times n)$-matrices was studied in a number of papers in mathematical physics (see \cite{TZ} and references therein). The set $U$ is a real $n^2$-dimensional manifold and $P$ is a convex $(n-1)^2$-dimensional polytope, the Birkhoff polytope. It is known that the image $\mu(U)$ is a proper subset of the Birkhoff polytope $P$. Two unitary matrices $u$ and $\tilde u$ are called equivalent if there exist unitary diagonal matrices $\varkappa$ and $\lambda$ such that $\tilde u = \varkappa u \lambda$. The mapping $\mu$ maps equivalent unitary matrices to  the same doubly stochastic matrix. The set of equivalence classes of unitary matrices with nonzero entries is an $(n-1)^2$-dimensional manifold. These dimensional considerations lead to the natural question of whether $\mu$ is a submersion. It was deemed that $\mu$ has to be a submersion almost everywhere on $U$ (see, e.g., \cite{B}). In this paper we give a detailed proof of this result which was announced in \cite{K}.

To each unitary $(n\times n)$-matrix $u$ with nonzero entries we relate an operator $I_u$ on an $n^2$-dimensional vector space $V$ and show that the dimension of the kernel of the tangent mapping $\mu_{*u}$ equals the multiplicity of the eigenvalue 1 in the spectrum of $I_u$. We introduce the notion of a symmetry group $G$ of a unitary matrix $u$. The group $G$ has a unitary representation in the space $V$ which commutes with the operator $I_u$. When the matrix $u$ has a large symmetry group, it is sometimes possible to calculate the spectral decomposition of the operator $I_u$. This allows us to show that the mapping $\mu$ is a submersion for almost all elements of $U$.

\section{Symbols on a finite set}\label{S:symbols}

In this section we introduce a construction of two types of symbols of operators on a finite dimensional vector space. These symbols are analogues of $pq$- and $qp$-symbols in quantum mechanics (see \cite{BS}).

We will model finite dimensional vector spaces as the spaces of functions on finite sets. Given a finite set $J$, denote by $F(J)$ the space of all complex-valued functions on $J$. The standard Hermitian product of functions $\varphi = \varphi_j, \psi = \psi_j \in F(J)$
is given by the formula 
\[
    \langle \varphi,\psi\rangle = \sum_{j\in J} \varphi_j \bar \psi_j.
\]
Now let $K$ and $L$ be two $n$-element index sets and $u = (u_{kl}), k\in K,l\in L$, be a unitary matrix. It determines a unitary isomorphism $\U : F(L) \to F(K)$ of the vector spaces $F(K)$ and $F(L)$ endowed with the standard Hermitian products: for $\psi = \psi_l \in F(L)$,
\[
   (\U \psi)_k = \sum_{l\in L} u_{kl} \psi_l.  
\]
Denote $M := K\times L$. We define two mappings
\[ 
C_u,D_u : F(M) \to \End F(K)
\]
such that for $f = f_{kl} \in F(M)$ the matrices $(x_{kk'})$ and $(y_{kk'})$ of the operators $C_uf$ and $D_uf$ in $F(K)$, respectively, are given by the following formulas:
\begin{equation}\label{E:cdsymb}
   x_{kk'} = \sum_{l \in L} u_{kl} f_{kl} \bar u_{k'l} \mbox{ and } y_{kk'} = \sum_{l \in L} u_{kl} f_{k'l} \bar u_{k'l}.
\end{equation}
The mappings $f \mapsto C_uf$ and $f \mapsto D_uf$ can be thought of as two different symbol-to-operator mappings. 

Given functions $a=a_k \in F(K)$ and $b = b_l\in F(L)$, denote by $\hat a$ and $\hat b$ the multiplication operators by these functions in $F(K)$ and $F(L)$, respectively. Then $ab = a_kb_l \in F(M), \ \U\hat b \U^* \in \End F(K),$ and it can be checked that
\begin{equation}\label{E:ord}
    C_u(ab) = \hat a \left(\U \hat b \U^*\right) \mbox{ and } D_u(ab) = \left(\U \hat b \U^*\right) \hat a. 
\end{equation}
Using the analogy with quantum mechanics, we may think of $F(K)$ and $F(L)$ as of the ``coordinate" and``momentum" representations of an abstract Hermitian vector space, respectively, with the isomorphism $\U$ playing the role of the Fourier transform. The mapping $C_u$ is obtained by the standard ordering of the ``coordinate" operators $\hat a$ and ``momentum" operators $\U\hat b \U^*$, while $D_u$ is obtained via the inverse ordering.

Given a function $f \in F(M)$ and an operator $A \in \End F(K)$, we will say that the function $f$ is a $C$-symbol of the operator $A$ if $A = C_u f$ and that $f$ is a $D$-symbol of $A$ if $A = D_u f$. Thus, $C$- and $D$-symbols correspond to $qp$- and $pq$-symbols, respectively. 

Now we will make a critical assumption that all elements $u_{kl}$ of the matrix $u$ are nonzero and introduce a Hermitian product $\langle\cdot,\cdot\rangle_u$ on the space $F(M)$ by the formula
\[
     \langle f,g\rangle_u = \sum_{k\in K, l\in L} f_{kl} \bar g_{kl}|u_{kl}|^2.
\]
The space $\End F(K)$ of operators in $F(K)$ carries the Hilbert-Schmidt Hermitian product $\langle\cdot,\cdot\rangle_{HS}$. If $(x_{kk'})$ and $(y_{kk'})$ are the matrices of operators $X$ and $Y$ in $F(K)$, respectively, then 
\[
    \langle X,Y \rangle_{HS} = \tr XY^\ast = \sum_{k, k' \in K} x_{kk'}\bar y_{kk'}.
\]
The following properties of $C$- and $D$-symbols are analogous to the corresponding properties of $pq$- and $qp$-symbols of Hilbert-Schmidt operators in quantum mechanics.
\begin{proposition}\label{P:isom}
   The mappings $C_u, D_u: F(M) \to \End F(K)$ are unitary isomorphisms of the Hermitian vector spaces $(F(M),\langle\cdot,\cdot\rangle_u)$ and $(\End V, \langle\cdot,\cdot\rangle_{HS})$. 
\end{proposition}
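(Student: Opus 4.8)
The plan is to establish both maps as isometries and then finish by a dimension count. Both $F(M)$ and $\End F(K)$ have complex dimension $n^2$, since $|M| = |K|\cdot|L| = n^2$ and $\dim \End F(K) = (\dim F(K))^2 = n^2$. Because every entry $u_{kl}$ is nonzero, all the weights $|u_{kl}|^2$ are strictly positive, so $\langle\cdot,\cdot\rangle_u$ really is a positive definite Hermitian product; the same holds for $\langle\cdot,\cdot\rangle_{HS}$. Consequently, once I show that $C_u$ and $D_u$ preserve these Hermitian products, each map is automatically injective, hence a linear isomorphism by equality of dimensions, which is precisely the assertion that it is a unitary isomorphism.

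The heart of the matter is therefore the single identity $\langle C_u f, C_u g\rangle_{HS} = \langle f,g\rangle_u$. First I would expand the left-hand side using \eqref{E:cdsymb} and the Hilbert--Schmidt formula, obtaining a fourfold sum over $k,k',l,l'$ whose summand is $u_{kl}\bar u_{k'l}\,\bar u_{kl'}u_{k'l'}\,f_{kl}\bar g_{kl'}$. The key step is to carry out the summation over the ``outer'' row index $k'$ first: by unitarity the columns of $u$ are orthonormal, so $\sum_{k'}\bar u_{k'l}u_{k'l'} = \delta_{ll'}$. This collapses the sum over $l'$, leaving $\sum_{k,l} u_{kl}\bar u_{kl}\,f_{kl}\bar g_{kl} = \sum_{k,l}|u_{kl}|^2 f_{kl}\bar g_{kl}$, which is exactly $\langle f,g\rangle_u$. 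This is the only place where unitarity of $u$ enters, and it enters precisely so as to reproduce the weights $|u_{kl}|^2$ that define $\langle\cdot,\cdot\rangle_u$.

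For $D_u$ one could repeat the analogous computation, now summing over the other row index and using orthonormality of the columns in the conjugate order. I find it cleaner, however, to reduce to the case already done. Comparing matrix entries in \eqref{E:cdsymb} shows that $(D_u f)^* = C_u \bar f$, where $\bar f$ is the entrywise complex conjugate, so that $D_u = (\cdot)^*\circ C_u\circ\overline{(\cdot)}$. Since $f \mapsto \bar f$ is a conjugate-linear isometry of $(F(M),\langle\cdot,\cdot\rangle_u)$ and $X \mapsto X^*$ is a conjugate-linear isometry of $(\End F(K),\langle\cdot,\cdot\rangle_{HS})$, the two conjugate-linear factors combine with the linear map $C_u$ to give a linear map, and a product of three isometries is again an isometry; hence $D_u$ inherits from $C_u$ the property of being a unitary isomorphism.

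There is no serious obstacle here: the argument is essentially a reorganization of a fourfold sum together with one application of the orthonormality relations. The only points requiring care are the index bookkeeping — keeping the two row indices and two column indices distinct and summing them in the right order — and the observation that it is the nonvanishing of the entries $u_{kl}$ that makes $\langle\cdot,\cdot\rangle_u$ nondegenerate, which is what upgrades the isometry property to a genuine isomorphism.
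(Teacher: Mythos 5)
Your proof is correct. The first half — the fourfold-sum computation showing $\langle C_u f, C_u g\rangle_{HS} = \langle f,g\rangle_u$ by summing over $k'$ first and using $\sum_{k'}\bar u_{k'l}u_{k'l'}=\delta_{ll'}$, followed by the dimension count and the remark that positivity of the weights $|u_{kl}|^2$ makes $\langle\cdot,\cdot\rangle_u$ a genuine Hermitian product — is exactly the paper's argument. Where you diverge is the treatment of $D_u$: the paper simply repeats the analogous computation (``a similar calculation''), whereas you reduce $D_u$ to $C_u$ via the identity $(D_u f)^* = C_u \bar f$, read off directly from \eqref{E:cdsymb}, combined with the observation that $f\mapsto\bar f$ and $X\mapsto X^*$ are conjugate-linear isometries of $(F(M),\langle\cdot,\cdot\rangle_u)$ and $(\End F(K),\langle\cdot,\cdot\rangle_{HS})$, so the composite $D_u = (\cdot)^*\circ C_u\circ\overline{(\cdot)}$ is unitary. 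This is a genuine, if modest, structural improvement: it makes the second half computation-free, and the identity you isolate is precisely the paper's Proposition \ref{P:conjug}, which the paper proves separately immediately afterwards; your route proves it en passant, and there is no circularity since that identity is verified entrywise without appeal to Proposition \ref{P:isom}. The small cost is that you must check antiunitarity of the two conjugation maps (e.g., $\langle X^*,Y^*\rangle_{HS} = \overline{\langle X,Y\rangle_{HS}}$ and $\langle\bar f,\bar g\rangle_u = \overline{\langle f,g\rangle_u}$) so that the three factors compose to a linear isometry; the paper's repetition of the sum is more pedestrian but self-contained and symmetric in $C_u$ and $D_u$.
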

\begin{proof}
 Given two functions $f,g \in F(M)$, let $(x_{kk'})$ and $(y_{kk'})$ be the matrices of the operators $C_uf$ and $C_ug$, respectively, so that
\[
   x_{kk'} = \sum_{l \in L} u_{kl} f_{kl} \bar u_{k'l} \mbox{ and } y_{kk'} = \sum_{l' \in L} u_{kl'} g_{kl'} \bar u_{k'l'}.
\]
Then, using the unitarity of the matrix $u$, we obtain that
\begin{eqnarray*}
   \langle C_uf, C_ug \rangle_{HS} = \sum_{k,k'\in K} x_{kk'} \bar y_{kk'} = 
\sum_{k,k'\in K, l,l'\in L} u_{kl} f_{kl} \bar u_{k'l} \bar u_{kl'} \bar g_{kl'} u_{k'l'}=\\
\sum_{k\in K, l,l'\in L} u_{kl} f_{kl} \bar u_{kl'} \bar g_{kl'} \delta_{ll'}=
\sum_{k\in K,l\in L} f_{kl}\bar g_{kl} |u_{kl}|^2 = \langle f,g\rangle_u,
\end{eqnarray*}
where $\delta_{ll'}$ is the Kronecker symbol. Thus $C_u: F(M) \to \End F(K)$ is an isometric mapping. Since the dimensions of $F(M)$ and $\End F(K)$ are both equal to $n^2$, the mapping $C_u$ is a unitary isomorphism. A similar calculation shows that $D_u: F(M) \to \End F(K)$ is also a unitary isomorphism.
\end{proof}

It follows from Proposition \ref{P:isom} that the mapping $I_u := C_u^{-1}D_u$ that maps the $D$-symbols to the corresponding $C$-symbols is a unitary operator in $(F(M),\langle\cdot,\cdot\rangle_u)$. Using the analogy with quantum mechanics, we will call the operator $I_u$ the Berezin transform. It can be checked that the operator $I_u$ is given by the following formula: for a function $f = f_{kl} \in F(M)$,
\[
     (I_u f)_{kl} = \sum_{k' \in K, l'\in L} \frac{u_{kl'} u_{k'l}}{u_{kl} u_{k'l'}}\, f_{k'l'}\, |u_{k'l'}|^2.
\]
\begin{proposition}\label{P:conjug} Given a function $f \in F(M)$, the operators $C_uf$ and $D_u\bar f$ are Hermitian conjugate. 
\end{proposition}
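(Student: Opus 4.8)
The plan is to prove the equivalent identity $(D_u\bar f)^* = C_u f$ by comparing matrix entries. Writing $(y_{kk'})$ for the matrix of $D_u\bar f$, the second formula in \eqref{E:cdsymb} (with $f$ replaced by $\bar f$) gives $y_{kk'} = \sum_{l\in L} u_{kl}\,\bar f_{k'l}\,\bar u_{k'l}$. The $(k,k')$ entry of the adjoint $(D_u\bar f)^*$ is the complex conjugate of $y_{k'k}$, namely $\overline{\sum_{l\in L} u_{k'l}\,\bar f_{kl}\,\bar u_{kl}} = \sum_{l\in L}\bar u_{k'l}\,f_{kl}\,u_{kl}$. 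This coincides term by term with the $(k,k')$ entry $x_{kk'} = \sum_{l\in L} u_{kl}\,f_{kl}\,\bar u_{k'l}$ of $C_u f$ from \eqref{E:cdsymb}, which proves the claim.

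The only point requiring care is the index bookkeeping in forming the adjoint: it simultaneously transposes the indices $k,k'$ and conjugates the entries, and one must check that the composition of these two operations turns the $D$-symbol formula for $\bar f$ into precisely the $C$-symbol formula for $f$. Because the summation index $l$ is not involved in the transposition, this amounts to a routine verification rather than a real obstacle; indeed the whole argument reduces to the observation that conjugating $u_{kl}f_{kl}\bar u_{k'l}$ and swapping $k\leftrightarrow k'$ returns the same expression.

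As a conceptual alternative I would argue without coordinates. Both $f\mapsto C_u f$ and $f\mapsto (D_u\bar f)^*$ depend linearly on $f$, so it suffices to verify the identity on the product functions $f = ab$, $a = a_k\in F(K)$, $b = b_l\in F(L)$, which span $F(M)$. For such $f$, formula \eqref{E:ord} gives $C_u(ab) = \hat a\,(\U\hat b\,\U^*)$, while $D_u(\overline{ab}) = (\U\,\widehat{\bar b}\,\U^*)\,\widehat{\bar a}$. Using $\widehat{\bar a} = \hat a^*$ and the unitarity of $\U$, which yields $\U\,\widehat{\bar b}\,\U^* = (\U\hat b\,\U^*)^*$, we obtain $D_u(\overline{ab}) = (\U\hat b\,\U^*)^*\hat a^* = \big(\hat a\,(\U\hat b\,\U^*)\big)^* = (C_u(ab))^*$. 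Thus $D_u\bar f = (C_u f)^*$, recovering the Hermitian conjugacy in a way that also explains it through the $pq$/$qp$ ordering picture.
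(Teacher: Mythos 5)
Your entrywise argument is correct and is essentially the paper's own proof read in the opposite direction: the paper conjugate-transposes the matrix of $C_u f$ and recognizes the result as the matrix of $D_u\bar f$, whereas you conjugate-transpose $D_u\bar f$ and recover $C_u f$; it is the same one-line computation with (\ref{E:cdsymb}).

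Your coordinate-free alternative, however, is a genuinely different route, and it is also correct. It uses two ingredients the paper's proof does not touch: (i) the map $f\mapsto (D_u\bar f)^*$ is linear in $f$ (the two antilinear operations, conjugation of the symbol and the Hermitian adjoint, compose to a linear map), so the identity $(D_u\bar f)^* = C_u f$ need only be checked on product functions $f=ab$, which span $F(M)$ since every delta function on $M=K\times L$ factors as a product of delta functions on $K$ and $L$; and (ii) for such products, formula (\ref{E:ord}) reduces the claim to the elementary facts $\widehat{\bar a}=\hat a^*$ and $\bigl(\U\hat b\,\U^*\bigr)^*=\U\,\widehat{\bar b}\,\U^*$, the latter by unitarity of $\U$. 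What this buys is structural insight: the Hermitian conjugacy of $C_u f$ and $D_u\bar f$ is seen to be forced purely by the reversal of the $qp$/$pq$ operator ordering, with no reference to matrix entries, so the argument would transfer verbatim to any symbol calculus defined by such orderings. What it costs is the spanning and linearity bookkeeping, which makes it longer than the paper's two-line computation; both are complete proofs.
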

\begin{proof}
  Let $(x_{kk'})$ be the matrix of the operator $C_uf$ in $F(K)$. Using formulas (\ref{E:cdsymb}) we see that the Hermitian conjugate matrix $(\bar x_{k'k})$ is given by the equation
\[
    \bar x_{k'k} = \sum_{l\in L} \bar u_{k'l} \bar f_{k'l} u_{kl}
\]
and therefore coincides with the matrix of the operator $D_u\bar f$. 
\end{proof}

This proposition allows to give a characterization of the $C$- and $D$-symbols of the skew-Hermitian operators in $F(K)$ which will be used in Section \ref{S:geom}.

\begin{corollary}\label{C:skew}
  Given functions $f,g \in F(M)$, the operator $C_uf$ is skew-Hermitian if and only if $f = - I_u \bar f$ and the operator $D_ug$ is skew-Hermitian if and only if $I_u g = -\bar g$. If $f = I_u g$, then the operator $C_u f = D_u g$ is skew-Hermitian if and only if $f = - \bar g$.
\end{corollary}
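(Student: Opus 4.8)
The entire statement is formal and rests on three ingredients already in hand: Proposition \ref{P:conjug}, which computes adjoints of $C$-operators; the defining relation $I_u = C_u^{-1}D_u$; and the injectivity of $C_u$ and $D_u$ guaranteed by Proposition \ref{P:isom}. The plan is to rewrite the skew-Hermitian condition $A^\ast = -A$ for each of $C_u f$ and $D_u g$ using the adjoint formula, substitute the relation between $C_u$ and $D_u$, and then cancel an isomorphism.

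For the first assertion I would read Proposition \ref{P:conjug} as $(C_u f)^\ast = D_u \bar f$. Thus $C_u f$ is skew-Hermitian precisely when $D_u \bar f = -C_u f$. Rearranging $I_u = C_u^{-1}D_u$ into $D_u = C_u I_u$ turns this into $C_u I_u \bar f = -C_u f$, and cancelling the injective map $C_u$ yields $I_u \bar f = -f$, that is, $f = -I_u \bar f$. For the second assertion I would take adjoints in Proposition \ref{P:conjug}, applying it with $\bar g$ in place of $f$, to record the companion identity $(D_u g)^\ast = C_u \bar g$. Then $D_u g$ is skew-Hermitian exactly when $C_u \bar g = -D_u g$; writing $C_u = D_u I_u^{-1}$ and cancelling the injective map $D_u$ gives $I_u^{-1}\bar g = -g$, equivalently $I_u g = -\bar g$.

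The third assertion is then immediate. The hypothesis $f = I_u g$ says precisely that $C_u f = C_u I_u g = D_u g$, so the two operators literally coincide; by the second assertion this common operator is skew-Hermitian iff $I_u g = -\bar g$, and since $I_u g = f$ this reads $f = -\bar g$.

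I do not expect any genuine obstacle, as the argument is essentially algebraic manipulation. The only place demanding care is the bookkeeping of complex conjugation: Proposition \ref{P:conjug} pairs $C_u f$ with $D_u\bar f$, so one must feed it $\bar g$ (not $g$) to obtain the adjoint of $D_u g$, and one must track $\bar f$ versus $f$ when cancelling $C_u$. Keeping these conjugates consistent, and correctly rearranging $I_u = C_u^{-1}D_u$ into both $D_u = C_u I_u$ and $C_u = D_u I_u^{-1}$, is the whole content of the proof.
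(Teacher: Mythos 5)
Your proof is correct and takes essentially the same route as the paper: the paper establishes the first assertion exactly as you do, by combining $(C_uf)^\ast = D_u\bar f$ from Proposition \ref{P:conjug} with $D_u = C_uI_u$ and the invertibility of $C_u$ from Proposition \ref{P:isom}, and then dismisses the remaining assertions as ``proved similarly.'' Your explicit derivations of $(D_ug)^\ast = C_u\bar g$ (by feeding $\bar g$ into Proposition \ref{P:conjug} and taking adjoints) and of the third claim from the second are precisely the details the paper leaves to the reader.
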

\begin{proof} Proposition \ref{P:conjug} states that
\[
    (C_uf)^*= D_u\bar f.
\]
Now the operator $C_uf$ is skew-Hermitian, i.e., $C_uf = -(C_uf)^*$, if and only if $C_u f = - D_u \bar f$ or, equivalently, $f = -I_u\bar f$. The rest of the corollary is proved similarly. 
\end{proof}

Assume that functions $f, g\in F(M)$ are such that $f = I_ug$ or, equivalently, $C_uf = D_ug$. According to Proposition \ref{P:conjug}, 
\[
D_u\bar f = (C_uf)^* = (D_ug)^* = C_u\bar g,
\]
whence $I_u\bar f = \bar g$. Let $g\in F(M)$ be an eigenfunction of the Berezin transform $I_u$ with the eigenvalue $\theta$. Since $I_u$ is unitary, we have $|\theta| =1$. Equation $I_u g = \theta g$ implies that $I_u(\bar\theta \bar g) = \bar g$, whence $I_u(\bar g) = \theta \bar g$. Thus we have proved the following 
\begin{lemma}\label{L:eigen}
The eigenspaces of the Berezin transform $I_u$ are invariant under complex conjugation. In particular, for any eigenvalue $\theta$ of multiplicity $k$ the $\theta$-eigenspace treated as a real vector space is a direct sum of the $k$-dimensional real vector spaces comprised of real and purely imaginary eigenfunctions, respectively. 
\end{lemma}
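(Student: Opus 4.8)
The plan is to treat the two assertions separately. For the first---that each eigenspace is invariant under complex conjugation---I would simply record the computation already set up above: if $I_u g = \theta g$ with $|\theta| = 1$, then applying the conjugation relation $I_u \bar f = \bar g$ (valid whenever $f = I_u g$) to $f = \theta g$ yields $I_u \bar g = \theta \bar g$. Thus the antilinear involution $\sigma\colon f \mapsto \bar f$ maps the eigenspace $E_\theta := \{f \in F(M) : I_u f = \theta f\}$ into itself.

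For the ``in particular'' statement I would view $E_\theta$, of complex dimension $k$, as a real vector space of dimension $2k$ on which $\sigma$ acts as a real-linear involution. Writing
\[
   V_+ = \{f \in E_\theta : \bar f = f\}, \qquad V_- = \{f \in E_\theta : \bar f = -f\}
\]
for the real and purely imaginary eigenfunctions, the standard decomposition $f = \tfrac12(f + \bar f) + \tfrac12(f - \bar f)$ exhibits $E_\theta = V_+ \oplus V_-$; here one uses that $E_\theta$ is a complex subspace preserved by $\sigma$, so that both summands remain in $E_\theta$.

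Finally I would match the dimensions by observing that multiplication by $i$ carries $V_+$ isomorphically onto $V_-$ (since $\overline{if} = -i\bar f$), whence $\dim_{\mathbb{R}} V_+ = \dim_{\mathbb{R}} V_-$; combined with $\dim_{\mathbb{R}} V_+ + \dim_{\mathbb{R}} V_- = 2k$ this gives $k$ for each. I do not expect a genuine obstacle: the whole argument is elementary linear algebra about an antilinear involution, and the only input beyond it is the conjugation identity together with $|\theta| = 1$ used to establish the invariance in the first place.
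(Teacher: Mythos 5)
Your proposal is correct and takes essentially the same route as the paper: the conjugation identity $I_u\bar f = \bar g$ (derived from Proposition \ref{P:conjug}) together with $|\theta|=1$ gives $I_u\bar g = \theta\bar g$, which is exactly the paper's argument for invariance of the eigenspaces under conjugation. The only difference is that you also spell out the elementary real-linear algebra for the ``in particular'' clause (the decomposition $E_\theta = V_+\oplus V_-$ and the isomorphism $V_+\to V_-$ given by multiplication by $i$), which the paper treats as immediate; this added detail is correct and harmless.
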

Denote by $E$ the subspace of $F(M)$ of the functions $f$ that can be represented as a sum of the form $f_{kl} = a_k + b_l$. The dimension of $E$ is $2n - 1$.
Formulas (\ref{E:ord}) imply that for functions $a = a_k$ and $b = b_l$ considered as elements of $F(M)$,
\[
     C_u(a) = D_u(a) \mbox{ and } C_u(b) = D_u(b),
\] 
which leads to the following statement.
\begin{lemma}\label{L:E}
 For any function $f \in E$ we have $I_uf = f$. Thus the multiplicity of 1 in the spectrum of the Berezin transform $I_u$ is at least $2n-1$.
\end{lemma}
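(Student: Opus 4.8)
The plan is to show that every element of $E$ is a fixed point of the Berezin transform $I_u$ and then to read off the multiplicity claim by a dimension count. Since $I_u = C_u^{-1}D_u$, proving $I_uf = f$ is equivalent to proving $C_uf = D_uf$. Because $C_u$ and $D_u$ are linear and $E$ is spanned by the functions depending only on $k$ (of the form $f_{kl} = a_k$) together with those depending only on $l$ (of the form $f_{kl} = b_l$), it suffices to verify the identity $C_uf = D_uf$ on these two families of generators.

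First I would establish the equality on the generators, which is exactly the displayed identity $C_u(a) = D_u(a)$, $C_u(b) = D_u(b)$ recorded just before the statement. This comes straight from formulas (\ref{E:ord}): viewing $a = a_k$ as the product of $a$ with the constant function $1$ on $L$, whose multiplication operator $\hat 1$ is the identity on $F(L)$, one gets $C_u(a) = \hat a\,(\U \hat 1 \U^*) = \hat a$ and $D_u(a) = (\U \hat 1 \U^*)\,\hat a = \hat a$, using $\U\U^* = \mathrm{Id}$; symmetrically, $C_u(b) = \U \hat b \U^* = D_u(b)$.

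By linearity, for any $f \in E$ written as $f_{kl} = a_k + b_l$ I then obtain $C_uf = C_u(a) + C_u(b) = D_u(a) + D_u(b) = D_uf$, hence $I_uf = C_u^{-1}D_uf = f$. This places $E$ inside the $1$-eigenspace of $I_u$, and since $\dim E = 2n-1$ the multiplicity of the eigenvalue $1$ is at least $2n-1$, as asserted.

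There is no serious obstacle here: the argument is a direct application of the ordering formulas (\ref{E:ord}) together with linearity and the known dimension of $E$. The only subtle point worth flagging is that the representation $f_{kl} = a_k + b_l$ is not unique, since one may add a constant to $a$ and subtract it from $b$; this is precisely why $\dim E = 2n-1$ rather than $2n$. The ambiguity is harmless, however, because $I_uf = f$ holds independently of the chosen representative.
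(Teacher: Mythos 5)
Your proof is correct and follows essentially the same route as the paper: the paper also derives $C_u(a) = D_u(a)$ and $C_u(b) = D_u(b)$ directly from formulas (\ref{E:ord}) (taking the other factor to be the constant function $1$, whose multiplication operator is the identity) and then concludes by linearity and the dimension count $\dim E = 2n-1$. Your explicit verification that $\U\hat 1\U^* = \mathrm{Id}$ and your remark on the non-uniqueness of the representation $f_{kl} = a_k + b_l$ merely spell out details the paper leaves implicit.
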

 
\section{Symmetries of a unitary matrix}

Assume that, as in Section \ref{S:symbols}, $K$ and $L$ are two $n$-element sets, and $u = (u_{kl}),\, k \in K,l\in L,$ is a unitary matrix which defines a unitary isomorphism
\[
    \U: F(L) \to F(K),
\]
where $F(K)$ and $F(L)$ are endowed with the standard Hermitian products. Assume further that $G$ is a group that acts on the left on the sets $K$ and $L$ and has faithful unitary representations $S$ and $T$ in the spaces $F(K)$ and $F(L)$, respectively, such that, given $g\in G, \varphi=\varphi_k \in F(K), \psi = \psi_l \in F(L)$,
\[
     \left(S_g \varphi\right)_k = a_k(g) \varphi_{g^{-1}k} \mbox{ and } \left(T_g \psi\right)_l = b_l(g) \psi_{g^{-1}l},
\]
where $a_k(g) \in F(K)$ and $b_l(g) \in F(L)$ are two unitary functions (i.e., $|a_k(g)| = |b_l(g)| = 1$). Finally we assume that the isomorphism $\U$ intertwines the representations $S$ and $T$: for $g \in G$,
\begin{equation}\label{E:inter}
                 S_g\,  \U = \U\,  T_g.
\end{equation}
If these conditions are satisfied, we say that $G$ is a {\it symmetry group of the matrix} $u$. Condition (\ref{E:inter}) can be expressed in terms of the matrix $u$ as follows:
\begin{equation}\label{E:matrixinter}
     a_k(g)\left( u_{g^{-1}k\, g^{-1}l} \right) \bar b_l(g) = u_{kl}.
\end{equation}
Denote by $R$ the following representation of the group $G$ in $F(M)$: for $g \in G$ and $f = f_{kl} \in F(M)$,
\[
       \left(R_g f\right)_{kl} = f_{g^{-1}k\, g^{-1}l}.
\]
Using formula (\ref{E:matrixinter}) and the fact that the functions $a_k(g)$ and $b_l(g)$ are unitary we obtain the following 
\begin{lemma}\label{L:unitary}
  The representation $R$ is a unitary representation of the group $G$ in the Hermitian vector space $(F(M), \langle\cdot,\cdot\rangle_u)$.
\end{lemma}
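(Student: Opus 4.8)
The plan is to establish two things separately: first that $R$ is a genuine representation of $G$ on $F(M)$, and second that each operator $R_g$ preserves the Hermitian product $\langle\cdot,\cdot\rangle_u$. The representation property is purely formal and follows from the fact that $G$ acts on the left on $K$ and $L$. For $g,h\in G$ and $f\in F(M)$ I would compute
\[
   \left(R_g R_h f\right)_{kl} = \left(R_h f\right)_{g^{-1}k\, g^{-1}l} = f_{h^{-1}g^{-1}k\, h^{-1}g^{-1}l} = f_{(gh)^{-1}k\, (gh)^{-1}l} = \left(R_{gh} f\right)_{kl},
\]
and note that $R_e$ is the identity. This part involves no analysis, only the left-action bookkeeping.

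The substantive step is the unitarity of each $R_g$, that is, the identity $\langle R_g f, R_g h\rangle_u = \langle f, h\rangle_u$ for all $f,h \in F(M)$. Here the single key observation is that the weight function $(k,l)\mapsto |u_{kl}|^2$ is invariant under the diagonal action of $G$ on $M = K\times L$. To see this I would substitute $k\mapsto gk$ and $l\mapsto gl$ into the intertwining relation (\ref{E:matrixinter}), obtaining $a_{gk}(g)\, u_{kl}\, \bar b_{gl}(g) = u_{gk\, gl}$; since $a_k(g)$ and $b_l(g)$ are unitary functions, taking absolute values yields $|u_{gk\, gl}| = |u_{kl}|$, hence $|u_{gk\, gl}|^2 = |u_{kl}|^2$.

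With this invariance in hand the unitarity is a short change-of-variables computation. Writing out the definition,
\[
   \langle R_g f, R_g h\rangle_u = \sum_{k\in K, l\in L} f_{g^{-1}k\, g^{-1}l}\, \bar h_{g^{-1}k\, g^{-1}l}\, |u_{kl}|^2,
\]
I would reindex by $k' = g^{-1}k$ and $l' = g^{-1}l$; because $G$ acts on $K$ and $L$ by bijections, $k'$ and $l'$ again range over all of $K$ and $L$, and the weight becomes $|u_{gk'\, gl'}|^2 = |u_{k'l'}|^2$ by the invariance just established. The right-hand side then reduces exactly to $\sum_{k'\in K, l'\in L} f_{k'l'}\, \bar h_{k'l'}\, |u_{k'l'}|^2 = \langle f,h\rangle_u$, which completes the proof. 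I do not expect any real obstacle here: the only non-formal point is recognizing the $G$-invariance of $|u_{kl}|^2$ as a consequence of (\ref{E:matrixinter}) together with $|a_k(g)| = |b_l(g)| = 1$, and once that is noted the rest is routine reindexing.
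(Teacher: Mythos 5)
Your proposal is correct and follows exactly the route the paper indicates: the paper derives the lemma from formula (\ref{E:matrixinter}) together with the unitarity of the functions $a_k(g)$ and $b_l(g)$, which is precisely the $G$-invariance of the weight $|u_{kl}|^2$ that you establish before reindexing the sum. Your write-up merely spells out the homomorphism check and the change of variables that the paper leaves implicit.
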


Denote by $Q$ the representation of the group $G$ in the space $\End F(K)$ by conjugation via the operators of the representation $S$: for $g \in G$ and $X \in \End F(K)$,
\[
     Q_g(X) = S_g X S_g^{-1} = S_g X S_g^*.
\]
The representation $Q$ is a unitary representation of the group $G$ in the Hermitian vector space $(\End F(K),\langle\cdot,\cdot\rangle_{HS})$.

The following equivariance property of the symbol-to-operator mappings $C_u$ and $D_u$ is analogous to the equivariance of the $pq$- and $qp$-symbol mappings with respect to the Heisenberg group.

\begin{proposition}\label{P:equiv}
The mappings $C_u, D_u: F(M) \to \End F(K)$ intertwine the representations $Q$ and $R$: for $g \in G$,
\[
     C_u(R_gf) = S_g(C_u(f))S_g^* \mbox{ and } D_u(R_gf) = S_g(D_u(f))S_g^*.
\]
\end{proposition}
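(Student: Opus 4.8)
The plan is to exploit the factorization (\ref{E:ord}) rather than to compute with matrix entries directly. Since $F(M) = F(K)\otimes F(L)$, the decomposable functions $f = ab$ with $a = a_k \in F(K)$ and $b = b_l \in F(L)$ span $F(M)$; as $C_u$, $D_u$, $R_g$, and conjugation by $S_g$ are all linear, it suffices to verify the two identities on such $f$. For a decomposable $f = ab$ one checks immediately from the definition of $R_g$ that $R_g(ab) = a'b'$, where $a'_k = a_{g^{-1}k}$ and $b'_l = b_{g^{-1}l}$ are the shifted functions. By (\ref{E:ord}) this reduces the first assertion to the operator identity
\[
   S_g\,\hat a\,(\U \hat b \U^*)\,S_g^* = \widehat{a'}\,(\U \widehat{b'} \U^*),
\]
and hence, after inserting $S_g^* S_g = \mathrm{id}$, to the two conjugation identities $S_g \hat a S_g^* = \widehat{a'}$ and $S_g(\U\hat b\U^*)S_g^* = \U\widehat{b'}\U^*$.

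The first of these is the heart of the matter. Writing $S_g = \hat{a}(g)\,\sigma_g$ as the composition of the pure permutation $(\sigma_g\varphi)_k = \varphi_{g^{-1}k}$ with the diagonal unitary $\hat{a}(g)$ of multiplication by $a_k(g)$, one gets $S_g\hat a S_g^* = \hat{a}(g)\,(\sigma_g\hat a\sigma_g^*)\,\hat{a}(g)^*$. A direct check shows $\sigma_g\hat a\sigma_g^* = \widehat{a'}$, which is again a multiplication (diagonal) operator, so it commutes with the diagonal unitaries $\hat{a}(g)$ and $\hat{a}(g)^*$; these then cancel because $|a_k(g)| = 1$. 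The phases never interfere precisely because conjugation carries multiplication operators to multiplication operators and all such operators commute — this is the step I expect to be the only genuine subtlety, since it is exactly where the unimodularity hypothesis on $a_k(g)$ and $b_l(g)$ is used.

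For the second identity I would invoke the intertwining relation (\ref{E:inter}) in the adjoint form $\U^* S_g^* = T_g^* \U^*$, which turns it into $S_g\U\hat b\U^* S_g^* = \U(T_g\hat b T_g^*)\U^*$, and then apply the same conjugation computation on $F(L)$ with $T_g$ and $b_l(g)$ in place of $S_g$ and $a_k(g)$, yielding $T_g\hat b T_g^* = \widehat{b'}$. Combining the two identities gives
\[
   S_g\,C_u(ab)\,S_g^* = \widehat{a'}\,(\U \widehat{b'} \U^*) = C_u(a'b') = C_u(R_g(ab)),
\]
which is the first equivariance statement. For $D_u$ the very same two conjugation identities apply, only composed in the opposite order dictated by $D_u(ab) = (\U\hat b\U^*)\hat a$, so that $S_g\,D_u(ab)\,S_g^* = (\U\widehat{b'}\U^*)\widehat{a'} = D_u(R_g(ab))$.

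An alternative, purely computational route is to substitute the definitions (\ref{E:cdsymb}) into the matrix of $S_g(C_u f)S_g^*$, reindex the summation variable by $l \mapsto g^{-1}l$, and apply (\ref{E:matrixinter}); the factors $a_k(g)$ and $b_l(g)$ again cancel by unimodularity, recovering the matrix of $C_u(R_g f)$. I would prefer the operator-theoretic argument above, as it isolates the single nontrivial point and makes the role of the symmetry conditions transparent.
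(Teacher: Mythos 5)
Your proposal is correct, but it takes a genuinely different route from the paper. The paper proves the proposition by a direct entrywise computation: it applies $S_g C_u(f)$ to a test function $\varphi \in F(K)$, reindexes the sums (substituting $l' = g^{-1}l$ and $\tilde k = gk'$), inserts factors $b_l(g)\bar b_l(g) = 1$, and invokes the matrix form (\ref{E:matrixinter}) of the intertwining condition twice to reassemble the result as $C_u(R_g f)\,S_g\varphi$; the $D_u$ case is then dismissed as ``similar.'' You instead reduce by linearity to decomposable symbols $f = ab$, use the factorizations (\ref{E:ord}), and establish two conjugation identities, $S_g \hat a S_g^* = \widehat{a'}$ (via the decomposition $S_g = \hat a(g)\sigma_g$ and commutativity of multiplication operators) and $S_g(\U\hat b\U^*)S_g^* = \U\widehat{b'}\U^*$ (via the operator form (\ref{E:inter}) of the intertwining condition), then compose them in the two orders to get both $C_u$ and $D_u$ statements at once. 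Your argument is more modular: it isolates exactly where unimodularity of the phase functions enters (the diagonal cancellation $\hat a(g)\widehat{a'}\hat a(g)^* = \widehat{a'}$) and where the symmetry hypothesis (\ref{E:inter}) enters, and it treats $C_u$ and $D_u$ symmetrically rather than by analogy. The trade-off is that you rely on (\ref{E:ord}) and on the spanning of $F(M)$ by decomposable functions, whereas the paper's computation needs only the raw definitions (\ref{E:cdsymb}) and (\ref{E:matrixinter}); since the paper asserts (\ref{E:ord}) without proof, your route inherits that ``can be checked'' dependence, though verifying it for $f = ab$ is a one-line calculation. Both proofs are sound; yours makes the structural content of the equivariance more transparent.
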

\begin{proof}
  Given $g \in G, \ f \in F(M)$ and $\varphi \in F(K)$, we get with the use of formula (\ref{E:matrixinter}) the following:
\begin{eqnarray*}
 (S C_u(f) \varphi)_k = \sum_{k' \in K, l'\in L} a_k(g) u_{g^{-1}k \,l'} f_{g^{-1}k \,l'} \bar u_{k'l'} \varphi_{k'} = \\
\sum_{\tilde k \in K, l\in L} a_k(g) u_{g^{-1}k \,g^{-1}l} f_{g^{-1}k \,g^{-1}l} \bar u_{g^{-1}\tilde k \,g^{-1}l} \varphi_{g^{-1}\tilde k} = \\
\sum_{\tilde k \in K, l\in L} a_k(g) u_{g^{-1}k \,g^{-1}l}\bar b_l(g) f_{g^{-1}k \,g^{-1}l}b_l(g) \bar u_{g^{-1}\tilde k \,g^{-1}l} \varphi_{g^{-1}\tilde k} = \\
\sum_{\tilde k \in K, l\in L} u_{kl}(R_g f)_{kl}b_l(g) \bar u_{g^{-1}\tilde k \,g^{-1}l} \varphi_{g^{-1}\tilde k} = \\
\sum_{\tilde k \in K, l\in L} u_{kl}(R_gf)_{kl} \left(\overline{ a_{\tilde k}(g)u_{g^{-1}\tilde k\, g^{-1}l}\bar b_l(g)}\right) a_{\tilde k}(g) \varphi_{\sigma(\tilde k)} = \\
\sum_{\tilde k \in K, l\in L} u_{kl}(R_g f)_{kl} \bar u_{\tilde kl}a_{\tilde k}(g) \varphi_{g^{-1}\tilde k} = \left(C_u\left(R_gf\right) S \varphi\right)_k.
\end{eqnarray*}
The proof that the mapping $D_u$ intertwines the representations $Q$ and $R$ is similar.
\end{proof}
Since both $C_u$ and $D_u$ intertwine the representations $Q$ and $R$ of the group $G$, we obtain the following
\begin{corollary}
 The Berezin transform $I_u = C_u^{-1} D_u$ commutes with the representation $R$: for $g\in G$,
\[
               I_u R_g = R_g I_u.
\]
\end{corollary}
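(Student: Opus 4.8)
The plan is to derive the statement directly from the equivariance established in Proposition~\ref{P:equiv}, with no further computation involving the matrix entries. The essential point is that both symbol-to-operator maps $C_u$ and $D_u$ intertwine the \emph{same} pair of representations, namely $R$ on the source $F(M)$ and $Q$ on the target $\End F(K)$; this is exactly what lets the target representation cancel once one forms $I_u = C_u^{-1}D_u$.

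First I would restate Proposition~\ref{P:equiv} as two identities of linear operators. Reading $C_u(R_g f) = S_g (C_u f) S_g^* = Q_g(C_u f)$ for every $f$, and likewise for $D_u$, yields the operator equations
\[
   C_u R_g = Q_g C_u \quad\text{and}\quad D_u R_g = Q_g D_u,
\]
valid for each $g \in G$. Since $C_u$ is invertible (it is a unitary isomorphism by Proposition~\ref{P:isom}, which is precisely what makes the definition of $I_u$ legitimate), I would then rewrite the first identity by applying $C_u^{-1}$ on the left and on the right, obtaining
\[
   C_u^{-1} Q_g = R_g C_u^{-1}.
\]

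Finally I would compute $I_u R_g$ by inserting $I_u = C_u^{-1}D_u$ and using the two relations in turn:
\[
   I_u R_g = C_u^{-1} D_u R_g = C_u^{-1} Q_g D_u = R_g C_u^{-1} D_u = R_g I_u,
\]
where the second equality uses $D_u R_g = Q_g D_u$ and the third uses $C_u^{-1} Q_g = R_g C_u^{-1}$. This is exactly the claimed commutation relation.

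There is no genuine obstacle here; the whole argument is the bookkeeping of composing intertwiners, and it is essentially immediate from Proposition~\ref{P:equiv}. The only thing to watch is the direction of the cancellation: one must invoke the $C_u$-relation in its inverted form and the $D_u$-relation in its original form, and one must keep in mind that the invertibility underpinning the very definition of $I_u$ rests on Proposition~\ref{P:isom}.
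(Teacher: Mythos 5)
Your proof is correct and follows exactly the paper's own reasoning: the paper derives this corollary immediately from Proposition~\ref{P:equiv} with the one-line remark that both $C_u$ and $D_u$ intertwine $Q$ and $R$, which is precisely the cancellation argument you spelled out. Your version simply makes explicit the operator identities $C_u R_g = Q_g C_u$, $D_u R_g = Q_g D_u$, and the inversion step that the paper leaves to the reader.
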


\section{Examples of unitary matrices with a large group of symmetries}\label{S:examples}

In this section we will consider two examples of unitary matrices with large symmetry groups. For each of these matrices we will obtain the spectral decomposition of the corresponding Berezin transform and in particular determine the multiplicity of the eigenvalue 1 in their spectra. In Section \ref{S:geom} we will give a geometric interpretation of this multiplicity.  

{\it Example 1.} Fix a natural number $n$ and denote by $\e$ the character of the cyclic group $\mathbb{Z}/n\mathbb{Z}$ given by the formula
\[
     \e(k) = \exp\left\{\frac{2\pi i k}{n}\right\}.
\]
Here we denote by $k$ (somewhat loosely) an integer modulo $n$. 
The Fourier transform $\F$ on the cyclic group $\mathbb{Z}/n\mathbb{Z}$ is given by the unitary matrix $u = (u_{kl})$ with
\[
     u_{kl} = \frac{1}{\sqrt{n}}\,  \e(kl),
\] 
where $k,l \in \mathbb{Z}/n\mathbb{Z}$ (here $K = L = \mathbb{Z}/n\mathbb{Z}$).
The corresponding Berezin transform is given by the following formula:
\begin{equation}\label{E:fourber}
    (I_uf)_{kl} = \frac{1}{n}\sum_{k',l' \in \mathbb{Z}/n\mathbb{Z}}\e(-(k - k')(l-l')) \,  f_{k'l'}. 
\end{equation}
For $r \in \mathbb{Z}/n\mathbb{Z}$ denote by $W_r$ the unitary operator of multiplication by the function $\e(rk)$ and by $Z_r$ the shift operator by the element $r$ such that for $\varphi_k \in F(\mathbb{Z}/n\mathbb{Z})$
\[
     (W_r \varphi)_k = \e(rk) \varphi_k \mbox{ and } (Z_r \varphi)_k = \varphi_{k + r}.
\]
 A simple calculation shows that 
\begin{equation}\label{E:fmt}
    \F^* W_r \F = Z_{-r} \mbox{ and } \F^* Z_r \F= W_r.
\end{equation}
Formulas (\ref{E:ord}) and (\ref{E:fmt}) imply that
\[
    C_u(\e(rk + sl)) = W_r \F W_s \F^* = W_r Z_s.
\]
Similarly,
\[ 
    D_u(\e(rk + sl)) = Z_s W_r.
\]
From the Weyl commutation relations
\begin{equation}\label{E:Weyl}
     Z_s W_r = \e(rs) W_r Z_s
\end{equation}
it follows immediately that
\[
    I_u(\e(rk + sl)) = \e(rs)\e(rk + sl).
\]
The functions $\e(rk + sl)$ with $r,s \in \mathbb{Z}/n\mathbb{Z}$ form a complete system of eigenvectors of the operator $I_u$ with the corresponding eigenvalues $\e(rs)$. Observe that $\e(rs) =1$ if and only if $rs = 0$ in $\mathbb{Z}/n\mathbb{Z}$, which implies the following
\begin{lemma}
 The multiplicity of the eigenvalue 1 of the Berezin transform corresponding to the matrix of the Fourier transform on the group $\mathbb{Z}/n\mathbb{Z}$ is equal to the number of pairs $(r,s)$ of integers such that $0 \leq r,s < n$ and $rs = 0 \pmod n$. In particular, this multiplicity equals $2n-1$ if and only if $n$ is a prime.
\end{lemma}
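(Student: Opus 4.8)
The first assertion is immediate from the discussion preceding the lemma. The functions $\e(rk+sl)$, $r,s\in\mathbb{Z}/n\mathbb{Z}$, form a complete system of eigenvectors of $I_u$, hence a basis of $F(M)$, and the eigenvalue attached to the pair $(r,s)$ is $\e(rs)$. Since $\e(rs)=1$ precisely when $rs\equiv 0\pmod n$, the multiplicity of the eigenvalue $1$ equals the number $N(n)$ of pairs $(r,s)$ with $0\le r,s<n$ and $rs\equiv 0\pmod n$. So the entire content of the lemma reduces to determining when $N(n)=2n-1$, and I would take $n\ge 2$ throughout (the case $n=1$ being degenerate).

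The organizing observation is that the pairs with a vanishing coordinate always contribute. Those with $r=0$ (and $s$ arbitrary) together with those with $s=0$ (and $r$ arbitrary) automatically satisfy $rs\equiv 0\pmod n$; the two families overlap only in the pair $(0,0)$, so they number exactly $n+n-1=2n-1$. Hence $N(n)\ge 2n-1$ for every $n$, and the only question is whether any pair with both coordinates nonzero also satisfies $rs\equiv 0\pmod n$.

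If $n=p$ is prime, then $\mathbb{Z}/p\mathbb{Z}$ is an integral domain, so $rs\equiv 0\pmod p$ forces $r\equiv 0$ or $s\equiv 0$. Thus the pairs counted above are the only ones and $N(p)=2p-1$. Conversely, if $n$ is composite I would write $n=ab$ with $1<a,b<n$ and observe that the pair $(r,s)=(a,b)$ lies in the admissible range, satisfies $rs=n\equiv 0\pmod n$, yet has neither coordinate equal to zero. This pair is therefore not among the $2n-1$ already counted, so $N(n)\ge 2n>2n-1$. Combining the two cases yields $N(n)=2n-1$ if and only if $n$ is prime.

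I do not anticipate any serious obstacle: once the eigenvalue count is reduced to the arithmetic quantity $N(n)$, the argument is elementary. The only step requiring a little care is the converse direction, where one must produce a genuine zero divisor in $\mathbb{Z}/n\mathbb{Z}$ to exhibit a pair beyond the $2n-1$ trivial ones; the integral-domain property of $\mathbb{Z}/p\mathbb{Z}$ is exactly what forbids such extra pairs in the prime case.
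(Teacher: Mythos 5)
Your proposal is correct and follows essentially the same route as the paper: the lemma is read off from the preceding observation that the characters $\e(rk+sl)$ form a complete eigenbasis of $I_u$ with eigenvalues $\e(rs)$, so the multiplicity of $1$ is the count of pairs with $rs\equiv 0\pmod n$. Your explicit prime/composite argument (integral domain in one direction, a factorization $n=ab$ producing an extra pair in the other) just fills in the elementary arithmetic the paper leaves to the reader, and your exclusion of the degenerate case $n=1$ is a reasonable precaution the paper implicitly makes as well.
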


{\it Remark.}  {\small The spectral decomposition of the Berezin transform $I_u$ is easy to obtain because the matrix $u$ of the Fourier transform on the group $\mathbb{Z}/n\mathbb{Z}$ has a large symmetry group. Let $\mathbb{T}$ denote the multiplicative group of complex numbers of modulus 1. The generalized Heisenberg group $ G := (\mathbb{Z}/n\mathbb{Z} \times \mathbb{Z}/n\mathbb{Z}) \ltimes \mathbb{T}$ with the product
\[
    (s_1, t_1, \theta_1) \cdot (s_2, t_2, \theta_2) = (s_1 + s_2, t_1 + t_2, \theta_1\theta_2 \varepsilon(t_1 s_2)),
\]
where $s_i,t_i \in \mathbb{Z}/n\mathbb{Z}$ and $\theta_i \in \mathbb{T}$, is a symmetry group of the matrix $u$. The corresponding representations $S$ and $T$ are given by the formulas
\[
           S_{(s,t,\theta)} = \theta W_s Z_t \mbox{ and } T_{(s,t,\theta)} = \F^* S_{(s,t,\theta)} \F = \theta Z_{-s} W_t = \theta \varepsilon(-st) W_t Z_{-s}.
\]
Given functions $\varphi_k$ and $\psi_l$ on $\mathbb{Z}/n\mathbb{Z}$ and an element $(s,t,\theta) \in G$, we have
\[
   \left(S_{(s,t,\theta)}\varphi\right)_k = \theta \varepsilon(sk)\varphi_{k + t} \mbox{ and } \left(T_{(s,t,\theta)}\psi\right)_l = \theta\varepsilon((l-s)t)\psi_{l-s}.
\]
The corresponding representation $R$ acts upon the space $F(M) = F\left((\mathbb{Z}/n\mathbb{Z})^2\right)$ via shifts: for  $(s,t,\theta) \in G$ and $f = f_{kl} \in F(M)$,
\[
     \left(R_{(s,t,\theta)}f\right)_{kl} = f_{k+t \, l -s}.
\]
Since the Berezin transform (\ref{E:fourber}) commutes with the representation $R$ it is shift-invariant and thus the functions $\e(rk + sl)$ with $r,s \in \mathbb{Z}/n\mathbb{Z}$ are its eigenfunctions.}

{\it Example 2.} Let $n$ be a fixed natural number. Set $K = \{1,2,\cdots,n\}$ and denote by $\mathfrak{S}_n$ the group of permutations of $K$ (the symmetric group). Fix a constant  $\theta$ such that $|\theta|=1$ and $\theta \neq \pm 1$. A simple check shows that the matrix $u = (u_{kl}),\, k,l \in K$, with
\begin{equation}\label{E:example2}
     u_{kl} = \delta_{kl} + \frac{\theta - 1}{n}
\end{equation}
has nonzero elements and is unitary. Denote by $\U \in \End F(K)$ the corresponding unitary operator (we assume that $F(K)$ carries the standard Hermitian structure). 
Let $S$ be the unitary representation of the symmetric group $\mathfrak{S}_n$ in the space $F(K)$ by permutations: for $\sigma \in \mathfrak{S}_n$ and $\varphi = \varphi_k \in F(K)$, 
\[
       (S_\sigma \varphi)_k = \varphi_{\sigma^{-1}(k)}.
\]
Since the operators $\U$ and $S_\sigma$ commute, the group $\mathfrak{S}_n$ is a symmetry group of the matrix $u$. According to Lemma \ref{L:unitary} the representation $R$ of the group $\mathfrak{S}_n$ in the Hermitian vector space $(F(M), \langle\cdot,\cdot\rangle_u)$
given by the formula
\[
     \left(R_\sigma f\right)_{kl} = f_{\sigma^{-1}(k)\sigma^{-1}(l)}.
\]
is unitary. Here $\sigma\in \mathfrak{S}_n$  and $f = f_{kl} \in F(M)$.

The representation $R$ has the following four isotypic components. Denote by $V_1$ the two-dimensional space of functions from $F(M)$ of the form $f_{kl} = a + b\delta_{kl}$, where $a$ and $b$ are constants. The group $\mathfrak{S}_n$ acts trivially on $V_1$. The space $V_2 \subset F(M)$ of functions of the form $f_{kl} = a_k + b_l + c_k\, \delta_{kl}$ where
\[
     \sum_{i = 1}^n a_i = \sum_{i = 1}^n b_i = \sum_{i = 1}^n c_i =0,
\] 
is the isotypic component of multiplicity 3 of the representation $R$ corresponding to the irreducible representation of $\mathfrak{S}_n$ of dimension $n-1$ parametrized by the partition $(n-1,1)$ according to \cite{J}. The space $V_3 \subset F(M)$ of functions $f_{kl}$ such that $f_{kl} = -f_{lk}$ for all $k,l \in K$ and 
\begin{equation}\label{E:zero}
     \sum_{l = 1}^n f_{kl} = 0
\end{equation}
is irreducible. It is parametrized by the partition $(n-2,1,1)$ and has the dimension
\[
     \dim V_3 = \frac{n^2 - 3n + 2}{2} = \frac{(n-1)(n-2)}{2}.
\]
Finally, the space $V_4 \subset F(M)$ of functions $f_{kl}$ such that $f_{kl} = f_{lk},\  f_{kk} = 0$ for all $k,l \in K$, and satisfying condition (\ref{E:zero}) is also irreducible. It is parametrized by the partition $(n-2,2)$ and has the dimension
\[
     \dim V_4 = \frac{n^2 - 3n}{2} = \frac{n(n-3)}{2}.
\]
The space $E \subset F(M)$ of functions of the form $f_{kl} = a_k + b_l$ is $\mathfrak{S}_n$-invariant. Therefore, its orthogonal complement $E^\bot$ with respect to the Hermitian product $\langle\cdot,\cdot\rangle_u$ is also $\mathfrak{S}_n$-invariant. The restriction of the representation $R$ onto $E^\bot$ is multiplicity free. It is the following orthogonal sum of irreducible subspaces: 
\begin{equation}\label{E:direct}
 E^\bot = \left(V_1 \cap E^\bot\right) \oplus \left(V_2 \cap E^\bot\right) \oplus V_3 \oplus V_4. 
\end{equation}
Using the facts that the Berezin transform $I_u$ commutes with the representation $R$ and acts trivially on the space $E$, we will find the spectral decomposition of the operator $I_u$. It is scalar on the direct summands in (\ref{E:direct}).

Given a function $f = f_{kl} \in V_3$, we will calculate the matrices $(x_{kk'})$ and $(y_{kk'})$ of the operators $C_uf$ and $D_uf$, respectively. Observe that $f_{kk} = 0$ for all $k \in K$. Using formulas (\ref{E:cdsymb}) and (\ref{E:zero}), we get
\begin{eqnarray*}
  x_{kk'} = \sum_{l\in L} \left(\delta_{kl} + \frac{\theta - 1}{n}\right) f_{kl}\left(\delta_{k'l} + \frac{\bar\theta - 1}{n}\right) =\\
\sum_{l\in L} \left(\delta_{kl}f_{kl}\delta_{k'l} + \frac{\theta - 1}{n} f_{kl}\delta_{k'l} + \delta_{kl} f_{kl} \frac{\bar\theta - 1}{n} + \left| \frac{\theta - 1}{n} \right|^2 f_{kl}\right) = \\
f_{kk}\delta_{kk'} + \frac{\theta - 1}{n} f_{kk'} + f_{kk}\frac{\bar\theta - 1}{n} = \frac{\theta - 1}{n} f_{kk'}.
\end{eqnarray*}
A similar calculation shows that
\[
   y_{kk'} = \frac{\bar\theta - 1}{n}\, f_{k'k} = - \frac{\bar\theta - 1}{n}\, f_{kk'} = \bar \theta\, \frac{\theta - 1}{n}\, f_{kk'} = \bar\theta\, x_{kk'}.
\]
Thus $D_uf = \bar\theta\, C_uf$, whence $I_u f = \bar\theta f$.

Now assume that $f = f_{kl} \in V_4$. Denote by $(x_{kk'})$ and $(y_{kk'})$ the matrices of the operators $C_uf$ and $D_uf$, respectively. Then similar calculations show that
\[
     x_{kk'} = \frac{\theta - 1}{n}\, f_{kk'} \mbox{ and } y_{kk'} = \frac{\bar\theta - 1}{n}\, f_{kk'} = - \bar\theta\,x_{kk'},
\] 
whence $D_uf = -\bar\theta \, C_uf$ and therefore $I_u f = -\bar\theta\, f$.

Tedious but straightforward calculations render the eigenvalues of the operator $I_u$ on the eigenspaces $V_1\cap E^\bot$ and $V_2\cap E^\bot$. The spectral decomposition of the Berezin transform $I_u$ is summarized in the following table.

\medskip

\begin{tabular}{|l|c|c|c|c|c|}\hline
Eigenspace & $E$ & $V_1\cap E^\bot$ & $V_2\cap E^\bot$ & $V_3$ & $V_4$\\
\hline
Dimension & $2n-1$ & 1 & $n-1$ & $\frac{n^2 - 3n + 2}{2}$ & $\frac{n^2 - 3n}{2}$ \\
\hline
Eigenvalue & 1 & $-\theta \frac{\bar\theta + n -1}{\theta + n - 1}$ & $-\frac{\bar\theta + n - 1}{\theta + n - 1}$ & $\bar\theta$ & $-\bar\theta$\\ 
\hline
\end{tabular}

\medskip

Observe that the condition $\theta \neq \pm 1$ implies that the multiplicity of 1 in the spectrum of the Berezin transform $I_u$ is equal to $2n-1$.

\section{A mapping from the unitary to the doubly stochastic matrices}\label{S:geom}

Let $K$ and $L$ be two $n$-element index sets. Denote by $U$ the set of all unitary matrices $(u_{kl}), k\in K,l\in L$. It is a principal homogeneous space of the group of unitary matrices $(u_{kk'}), k,k'\in K$, and thus is a real $n^2$-dimensional compact manifold. Denote by $U_0$ the set of all unitary matrices from $U$ with nonzero entries. It is an open subset of $U$.

Let $B$ be the affine space of real matrices $(p_{kl}), k\in K,l\in L,$ satisfying the equations
\[
    \sum_{k \in K} p_{kl} =1 \mbox{ and } \sum_{l\in L} p_{kl} =1.
\]
Its dimension is $(n-1)^2$. The set $P$ of matrices from $B$ with nonnegative elements
is an $(n-1)^2$-dimensional convex polytope, the Birkhoff polytope (see \cite{Z}). Its elements are called doubly stochastic matrices. The Birkhoff polytope is the convex hull of the permutation matrices $(\delta_{\sigma(k)l})$, where $\sigma: K \to L$ is a bijection. Its interior consists of the doubly stochastic matrices with nonzero entries.

Two unitary matrices $u = (u_{kl})$ and $\tilde u = (\tilde u_{kl})$ from $U$ are called equivalent if there exist unitary diagonal matrices $\varkappa = (\varkappa_k \delta_{kk'}), k,k' \in K,$ and $\lambda = (\lambda_l \delta_{ll'}), l,l'\in L,$ such that $\tilde u = \varkappa u \lambda$, i.e., $\tilde u_{kl} = \varkappa_k u_{kl} \lambda_l$. We will write $u \sim \tilde u$ for equivalent matrices $u$ and $\tilde u$.

Denote by $\mu: U \to B$ the mapping that maps a unitary matrix $(u_{kl})$ to the doubly stochastic matrix $(|u_{kl}|^2)$. It is known that the image $\mu(U)$ is a proper subset of the Birkhoff polytope $P$ (see \cite{TZ}). 

The mapping $\mu$ maps equivalent matrices to the same doubly stochastic matrix. Thus it factors through the singular qoutient space $U/\sim$. The quotient space $U_0/\sim$ is an $(n-1)^2$-dimensional manifold. These observations lead to the natural question of whether the mapping $\mu$ is a submersion. It is obvious that $\mu$ is not a submersion on the complement $U \backslash U_0$ since it is mapped to the boundary of the Birkhoff polytope. In the rest of this paper we will show that for each $n$ there exists a unitary $(n\times n)$-matrix $u$ such that $\mu$ is a submersion at $u$. Since $\mu$ is an algebraic mapping of real affine manifolds, this will imply that $\mu$ is a submersion for almost all elements of $U$.

Let $u = (u_{kl})$ be an arbitrary unitary matrix in $U$ and $\U: F(L) \to F(K)$ be the corresponding unitary isomorphism. We want to give different descriptions of the tangent space $T_u U$. Assume that $u(t) = u_{kl}(t) \in U$ is a family of unitary matrices smoothly depending on a small real parameter $t$ and such that $u(0) = u$. Then $u'(0) = (u'_{kl}(0))$ is a tangent vector in $T_u U$. Here we use that $U$ is a real submanifold in the affine space of complex-valued matrices $(x_{kl})$ with $k\in K$ and $l\in L$. 
This way one can obtain all tangent vectors in $T_u U$. Each matrix $u(t)$ determines a unitary isomorphism $\U(t): F(L) \to F(K)$. Since $\U(t)(\U(t))^* = 1$ and $\U(0) = \U$, we get that
\[
           \U'(0)\U^* + \U(\U'(0))^* = 0.
\]
The operator
\begin{equation}\label{E:X}
     X = \U'(0)\U^* = - \U(\U'(0))^*
\end{equation}
in $F(K)$ corresponding to the tangent vector $u'(0)$ is skew-Hermitian. This correspondence identifies the tangent space $T_u U$ with the space of skew-Hermitian operators in $F(K)$. 

Now assume that $u\in U_0$.  Corollary \ref{C:skew} allows to identify the space of skew-Hermitian operators in $F(K)$ (and thus the tangent space $T_u U_0$) with the space of $C$-symbols $f \in F(M)$ such that $f = - I_u\bar f$ and also with the space of $D$-symbols $g \in F(M)$ such that $I_ug = - \bar g$.

Formula (\ref{E:X}) implies that the matrix $(x_{kk'}), k,k'\in K,$ of the operator $X$ satisfies the following equations:
\begin{equation}\label{E:xkk}
    x_{kk'} = \sum_{l\in L} u'_{kl}(0) \bar u_{k'l} = - \sum_{l\in L} u_{kl} \bar u'_{k'l}(0). 
\end{equation}
Assume that $f=f_{kl}$ and $g = g_{kl}$ are the $C$- and $D$-symbol of the operator $X$, respectively, that is,
\[
     X = C_uf = D_ug. 
\]
It follows from formulas (\ref{E:cdsymb}) and (\ref{E:xkk}) that
\begin{equation}\label{E:fandgforX}
     f_{kl} = \frac{u'_{kl}(0)}{u_{kl}} \mbox{ and } g_{kl} = -\frac{\bar u'_{kl}(0)}{\bar u_{kl}}.
\end{equation}
In particular, $f = -\bar g$. Formulas (\ref{E:X}) and (\ref{E:fandgforX}) mean that the
$C$- and $D$-symbols are related to the perturbations of the matrix elements of $u$ while the operators corresponding to these symbols are related to the perturbations of the isomorphism $\U$.

The mapping $\mu$ maps the matrix $u(t) = (u_{kl}(t))$ to the doubly stochastic matrix $p(t) = (p_{kl}(t))$ such that
\[
     p_{kl}(t) = |u_{kl}(t)|^2 = u_{kl}(t)\bar u_{kl}(t).
\]
We set $p = p(0)$. The tangent mapping $\mu_{*u}$ maps the tangent vector $u'(0) \in T_{u}U_0$ to the tangent vector $p'(0) \in T_{p}P_0$. It follows from formulas (\ref{E:fandgforX}) that
\[
    p'_{kl}(0) = u'_{kl}(0)\bar u_{kl} + u_{kl}\bar u'_{kl}(0) = \left(f_{kl} - g_{kl}\right)|u_{kl}|^2.
\]
Thus the tangent vector $u'(0)\in T_u U_0$ lies in the kernel of the tangent mapping $\mu_{*u}$ if and only if the $C$- and $D$-symbols of the corresponding skew-Hermitian operator $X$ coincide, $f = g$, or, equivalently, $I_u g = g$, that is, $g$ is an eigenvector of the operator $I_u$ with the eigenvalue 1.
Since the operator $X$ is skew-Hermitian, according to Corollary \ref{C:skew}, $f = -\bar g$ and therefore the condition that $f = g$ implies that $g = -\bar g$, i.e., that $g$ is purely imaginary. We conclude that the kernel of the tangent mapping $\mu_{*u}$ can be identified with the space of purely imaginary eigenfunctions of the Berezin transform $I_u$ with the eigenvalue 1. Now, Lemma \ref{L:eigen} implies the following

\begin{theorem}\label{T:ker}
  Given a unitary matrix $u \in U_0$, the dimension of the kernel of the tangent mapping $\mu_{*u}$ is equal to the multiplicity of 1 in the spectrum of the Berezin transform $I_u$. 
\end{theorem}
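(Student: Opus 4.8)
The plan is to express the tangent mapping $\mu_{*u}$ entirely in terms of operator symbols and thereby reduce the computation of its kernel to a spectral question about the Berezin transform $I_u$. First I would use the identification, set up above, of the tangent space $T_u U_0$ with the space of skew-Hermitian operators $X$ in $F(K)$ through $X = \U'(0)\U^*$. By Corollary \ref{C:skew} such an $X$ carries a $C$-symbol $f$ and a $D$-symbol $g$ satisfying $f = I_u g$ and $f = -\bar g$, and these symbols are tied to the infinitesimal perturbation $u'(0)$ by the explicit formulas $f_{kl} = u'_{kl}(0)/u_{kl}$ and $g_{kl} = -\bar u'_{kl}(0)/\bar u_{kl}$.

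Next I would differentiate $p_{kl}(t) = u_{kl}(t)\bar u_{kl}(t)$ at $t = 0$ to obtain $p'_{kl}(0) = (f_{kl} - g_{kl})|u_{kl}|^2$. Because $|u_{kl}|^2 \neq 0$ for every $u \in U_0$, the tangent vector $u'(0)$ lies in $\Ker \mu_{*u}$ precisely when $f = g$. Substituting $f = I_u g$ turns this into the eigenvalue equation $I_u g = g$, while substituting the skew-Hermiticity relation $f = -\bar g$ forces $g = -\bar g$, i.e. $g$ is purely imaginary. Conversely, any purely imaginary $g$ with $I_u g = g$ automatically satisfies $I_u g = g = -\bar g$, so it genuinely corresponds to a skew-Hermitian operator and hence to a tangent vector. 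Thus $\Ker \mu_{*u}$ is identified, as a real vector space, with the space of purely imaginary $1$-eigenfunctions of $I_u$.

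The step carrying the real content is the final dimension count, and it is where I expect the only subtlety to lie. One might fear that imposing reality halves the available freedom relative to the complex multiplicity of the eigenvalue $1$; Lemma \ref{L:eigen} shows this is not so. Since each eigenspace of $I_u$ is stable under complex conjugation, the $1$-eigenspace of complex dimension $k$ splits, as a real vector space, into a $k$-dimensional space of real eigenfunctions and a $k$-dimensional space of purely imaginary eigenfunctions. Therefore the real dimension of the purely imaginary $1$-eigenspace is exactly $k$, so $\dim \Ker \mu_{*u} = k$, the multiplicity of $1$ in the spectrum of $I_u$. Everything apart from this bookkeeping between complex multiplicity and real dimension is the direct symbol computation already recorded above.
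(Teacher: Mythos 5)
Your proposal is correct and follows essentially the same route as the paper: the identification of $T_u U_0$ with skew-Hermitian operators, the symbol formulas $f_{kl} = u'_{kl}(0)/u_{kl}$, $g_{kl} = -\bar u'_{kl}(0)/\bar u_{kl}$, the computation $p'_{kl}(0) = (f_{kl}-g_{kl})|u_{kl}|^2$ reducing the kernel condition to $I_u g = g$ with $g$ purely imaginary, and the appeal to Lemma \ref{L:eigen} for the real-versus-complex dimension count. Your explicit check of the converse direction (that every purely imaginary $1$-eigenfunction does arise from a tangent vector) is a small point the paper leaves implicit, but it is the same argument.
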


Theorem \ref{T:ker} and Example 2 from Section \ref{S:examples} show that the mapping $\mu$ is a submersion at the matrix $u = (u_{kl})$ given by (\ref{E:example2}). Thus, $\mu$ is a submersion almost everywhere on $U$.

\end{document}